\begin{document}
\def\rn{{\mathbb R^n}}  \def\sn{{\mathbb S^{n-1}}}
\def\co{{\mathcal C_\Omega}}
\def\z{{\mathbb Z}}
\def\nm{{\mathbb (\rn)^m}}
\def\mm{{\mathbb (\rn)^{m+1}}}
\def\n{{\mathbb N}}
\def\cc{{\mathbb C}}

\newtheorem{defn}{Definition}
\newtheorem{thm}{Theorem}
\newtheorem{lem}{Lemma}
\newtheorem{cor}{Corollary}
\newtheorem{rem}{Remark}

\title{\bf\Large Sharp bound for $m$-linear $n$-dimensional Hardy-Littlewood-P\'{o}lya operator in Morrey space on Heisenberg group
\footnotetext{{\it Key words and phrases}: Hardy Littlewood-P\'{o}lya operator, Hilbert operator, weighted Morrey space, Heisenberg group
\newline\indent\hspace{1mm} {\it 2020 Mathematics Subject Classification}: Primary 42B35; Secondary 26D15.}}

\date{}
\author{Xiang Li, Zhongci Hang, Zhanpeng Gu\footnote{Corresponding author}, Dunyan Yan}
\maketitle
\begin{center}
\begin{minipage}{13cm}
{\small {\bf Abstract:}\quad
In this paper, we obtained the sharp bounds for $m$-linear $n$-dimensional Hardy-Littlewood-P\'{o}lya operator and Hilbert operator in two power weighted Morrey space on Heisenberg group.
 }
\end{minipage}
\end{center}

\section[Introduction]{Introduction}
In \cite{Hardy}, the Hardy-Littlewood-P\'{o}lya operator is defined by
$$
P f(x)=\int_0^{+\infty} \frac{f(y)}{\max (x, y)} d y .
$$
B\'{e}nyi and Oh gave that the norm of Hardy-Littlewood-P\'{o}lya operator on $L^q\left(\mathbb{R}^{+}\right.$)(see \cite{BO}), $1<q<\infty$, was
$$
\|P\|_{L^q\left(\mathbb{R}^{+}\right) \rightarrow L^q\left(\mathbb{R}^{+}\right)}=\frac{q^2}{q-1} .
$$
We had known that the $m$-linear $n$-dimensional Hardy-Littlewood-P\'{o}lya operator as a multilinear generalization of Calder\'{o}n operator, is defined by
$$
T(f_1, \ldots, f_m)(x)=\int_{\mathbb{R}^{n m}} \frac{f_1\left(y_1\right) \cdots f_m\left(y_m\right)}{\max \left(|x|^n,\left|y_1\right|^n, \ldots,\left|y_n\right|^n\right)^m} d y_1 \cdots d y_m ,
$$
where $x \in \mathbb{R}^n \backslash\{0\}$ and $f_1, \ldots, f_m$ are nonnegative locally integrable functions on $\mathbb{R}^n$.

We have learned that the Hilbert operator is the essential extension of the classical Hilbert's inequality, the $m$-linear $n$-dimensional Hilbert operator is defined by
$$
T^*_m(f_1,\ldots, y_m)(x):=\int_{\mathbb{R}^{nm}}\frac{f_1(y_1)\cdots f_m(y_m)}{(|x|^n+|y_1|^n+\cdots+|y_m|^n)^m}d y_1\cdots dy_m,x\in\mathbb{R}^n\backslash\{0\}.
$$
For $m=1$, the following is a known sharp estimate
$$
\int_0^{\infty} T_1^* f(x) g(x) d x \leq \frac{\pi}{\sin (\pi / p)}\|f\|_{L^p(0, \infty)}\|g\|_{L^{p^{\prime}}(0, \infty)}.
$$

In this paper, we will study the sharp bounds of $m$-linear $n$-dimensional Hardy-Littlewood-P\'{o}lya operator and Hilbert operator in two power weighted Morrey space on Heisenberg group. Next, we would like give some concepts and operational rules of Heisenberg group. Anyway, we will provide a detailed proof of the sharp bounds of the above operators.

Firstly, let us recall the basic knowledge of the Heisenberg group. The Heisenberg group $\mathbb{H}^n$ is a non-commutative nilpotent Lie group, the underlying manifold $\mathbb{R}^{2n}\times\mathbb{R}$ with the group law
$$
x \circ y=\left(x_1+y_1, \ldots, x_{2 n}+y_{2 n}, x_{2 n+1}+y_{2 n+1}+2 \sum_{j=1}^n(y_j x_{n+j}-x_j y_{n+j})\right)
$$
and
$$
\delta_r\left(x_1, x_2, \ldots, x_{2 n}, x_{2 n+1}\right)=\left(r x_1, r x_2, \ldots, r x_{2 n}, r^2 x_{2 n+1}\right), \quad r>0,
$$
where $x=(x_1,\cdots,x_{2n},x_{2n+1})$ and $y=(y_1,\cdots,y_{2n},y_{2n+1})$.

The Haar measure on $\mathbb{H}^n$ coincides with the usual Lebesgue measure on $\mathbb{R}^{2n+1}$. We denote the measure of any measurable set $E \subset \mathbb{H}^n$ by $|E|$.Then we have
$$
|\delta_r(E)|=r^Q|E|, d(\delta_r x)=r_Q d x,
$$
where $Q=2n+2$ is called the homogeneous dimension of $\mathbb{H}^n$.

The Heisenberg distance derived from the norm
$$
|x|_h=\left[\left(\sum_{i=1}^{2 n} x_i^2\right)^2+x_{2 n+1}^2\right]^{1 / 4},
$$
where $x=(x_1,x_2,\cdots,x_{2n},x_{2n+1})$ is given by
$$
d(p, q)=d\left(q^{-1} p, 0\right)=\left|q^{-1} p\right|_h.
$$
This distance $d$ is left-invariant in the sense that $d(p,q)$ remains unchanged when $p$ and $q$ are both left-translated by some fixed vector on $\mathbb{H}^n$. Furthermore, $d$ satisfies the triangular inequality \cite{AK}
$$
d(p, q) \leq d(p, x)+d(x, q), \quad p, x, q \in \mathbb{H}^n.
$$

For $r>0$ and $x\in\mathbb{H}^n$, the ball and sphere with center $x$ and radius $r$ on $\mathbb{H}^n$ are given by
$$
B(x, r)=\left\{y \in \mathbb{H}^n: d(x, y)<r\right\},
$$
and
$$
S(x, r)=\left\{y \in \mathbb{H}^n: d(x, y)=r\right\},
$$
respectively. And we have
$$
|B(x, r)|=|B(0, r)|=\Omega_Q r^Q,
$$
where
$$
\Omega_Q=\frac{2 \pi^{n+\frac{1}{2}} \Gamma(n / 2)}{(n+1) \Gamma(n) \Gamma((n+1) / 2)}
$$
is the volume of the unit ball $B(0,1)$ on $\mathbb{H}^n$ and $\omega_Q=Q \Omega_Q$ (see \cite{CT}). For more details about Heisenberg group can be refer to \cite{GB} and \cite{ST}.

In \cite{LX}, Li et.al.defined $m$-linear $n$-dimensional Hardy-Littlewood-P\'{o}lya operator and Hilbert operator on Heisenberg group.
\begin{defn}
	Suppose that $f_1,\ldots,f_m$ be  nonnegative locally integrable functions on $\mathbb{H}^n$. The $m$-linear $n$-dimensional Hardy-Littlewood-P\'{o}lya operator is defined by
	\begin{equation}
		P^h_m(f_1,\ldots,f_m)(x)=\int_{\mathbb{H}^{nm}}\frac{f_1(y_1)\cdots f_m(y_m)}{[\text{max} (|x|_h^Q,|y_1|_h^Q,\ldots,|y_m|_h^Q)]^m}dy_1\cdots dy_m,x\in\mathbb{H}^n\backslash\{0\}.
	\end{equation}
\end{defn}
\begin{defn}
	Suppose that $f_1,\ldots,f_m$ be nonnegative locally integrable functions on $\mathbb{H}^n$. The $m$-linear $n$-dimensional Hilbert operator is defined by
	\begin{equation}
		P^{h*}_m(f_1,\ldots,f_m)(x)=\int_{\mathbb{H}^{nm}}\frac{f_1(y_1)\cdots f_m(y_m)}{(|x|^Q_h+|y_1|_h^Q+\cdots+|y_m|_h^Q)^m}dy_1\cdots d y_m,x\in\mathbb{H}^n\backslash\{0\}.
	\end{equation}
\end{defn}
Next, we begin to consider the form of two power weighted Morrey space on Heisenberg group. We will use the following notation. Any measurable function $\omega$ in a set $E$ is given by
$$
\omega(E)=\int_E\omega dx.
$$
In what follows, $B(|x|_h,R)$ denotes the ball centered at $|x|_h$ with radius $R$, $|B(|x|_h,R)|$ denotes the Lebesgue measure of $B(|x|_h,R)$. We will use these notation in the following definition of two power weighted Morrey spaces.
\begin{defn}
	Let $\omega_1, \omega_2:\mathbb{H}^n\rightarrow(0,\infty)$ are positive measurable function, $1\leq q<\infty$ and $-1/q\leq\lambda<0$. The two power weighted Morrey space $L^{q,\lambda}(\mathbb{H}^n,\omega_1,\omega_2)$ is defined by
	$$
	L^{q, \lambda}(\mathbb{H}^n, \omega_1, \omega_2)=\{f \in L_{l o c}^q:\|f\|_{L^{q, \lambda}(\mathbb{H}^n, \omega_1, \omega_2)}<\infty\},
	$$
	where
	$$
	\|f\|_{L^{q, \lambda}\left(\mathbb{H}^n, w_1, w_2\right)}=\sup _{a \in \mathbb{H}^n, R>0} w_1(B(|a|_h, R))^{-(\lambda+1 / q)}\left(\int_{B(|a|_h, R)}|f(x)|^q w_2(x) d x\right)^{1 / q} .
	$$
\end{defn}
Now, we will give our main results as follows.
\section{Sharp bound for Hardy-Littlewood-P\'{o}lya operator}
\begin{thm}\label{main_1}
	Let $f_i$ be radial functions in $L^{q_j,\lambda}(\mathbb{H}^n,|x|_h^\alpha,|x|^\frac{q_j\gamma_j}{q})$, $1\leq q<\infty,-\frac{1}{q}\leq\lambda<0,1<q_j<\infty,\frac{1}{q}=\frac{1}{q_1}+\cdots+\frac{1}{q_m},\gamma=\gamma_1\cdots+\gamma_m$, $-\frac{1}{q_j} \leq \lambda_j<0 \text { with } j=1, \ldots, m.$
	Then we have
	\begin{equation}
		\|P^h_m(f_1,\ldots,f_m)\|_{L^{q,\lambda}(\mathbb{H}^n,|x|_h^\alpha,|x|_h^\gamma)}\leq A_m\prod_{j=1}^m\|f_j\|_{L^{q_j, \lambda}_j(\mathbb{H}^n,|x|_h^\alpha,|x|_h^{\frac{q_j \gamma_j}{q}})},
	\end{equation}
	where
	\begin{equation}\label{main1}
		A_m=\int_{\mathbb{H}^{nm}}\frac{\prod_{j=1}^{m}|y_j|^{Q\lambda_j-\frac{\gamma_j}{q}+\alpha(\lambda_j+\frac{1}{q_j})}} {[\max(1,|y_1|_h^Q,\ldots,|y_m|_h^Q)]^m}d y_1 \cdots d y_m.
	\end{equation}
	Moreover, if $\alpha\neq -Q,-\frac{1}{q_j}<\lambda_j<0$ and $q\lambda=q_j\lambda_j$ with $j=1,\ldots,m$, then we have
	\begin{equation}
		\|P^h_m(f_1,\ldots,f_m)\|_{\prod_{j=1}^m L^{q_j,\lambda_j}(\mathbb{H}^n,|x|_h^\alpha,|x|_h^{\frac{q_j\gamma_j}{q}})\rightarrow L^{q,\lambda}(\mathbb{H}^n,|x|_h^\alpha,|x|_h^\gamma)}= A_m.
	\end{equation}
\end{thm}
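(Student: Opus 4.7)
The plan is to split the argument into (i) deriving the general upper bound $A_m\prod_j\|f_j\|$ by a Heisenberg dilation followed by Minkowski's and H\"older's inequalities, and (ii) establishing sharpness by testing on radial power functions tuned to the critical exponent of each Morrey space.

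For the upper bound, I first substitute $y_j=\delta_{|x|_h}(z_j)$ in the definition of $P^h_m$; using $|y_j|_h=|x|_h|z_j|_h$, $dy_j=|x|_h^Q dz_j$, and the $Q$-homogeneity of the denominator, the operator simplifies to
$$P^h_m(f_1,\ldots,f_m)(x)=\int_{\mathbb{H}^{nm}}\frac{\prod_{j=1}^m f_j(\delta_{|x|_h}(z_j))}{[\max(1,|z_1|_h^Q,\ldots,|z_m|_h^Q)]^m}\,dz_1\cdots dz_m.$$
Fixing a ball $B(|a|_h,R)$, Minkowski's integral inequality pulls the $L^q(|x|_h^\gamma dx)$-norm inside the $z$-integral, and H\"older's inequality with exponents $q_j/q$ (using $1/q=\sum 1/q_j$ and $|x|_h^\gamma=\prod_j|x|_h^{\gamma_j}$) separates the integrand into a product of weighted $L^{q_j}$-integrals of $f_j(\delta_{|x|_h}(z_j))$. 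A second Heisenberg dilation $u=\delta_{|z_j|_h}(x)$ inside each factor maps the domain into $B(|z_j|_h|a|_h,|z_j|_h R)$, and the homogeneity $\omega_1(\delta_r B)=r^{\alpha+Q}\omega_1(B)$ extracts exactly the power $|z_j|_h^{Q\lambda_j-\gamma_j/q+\alpha(\lambda_j+1/q_j)}$, so that bounding by the Morrey norm of $f_j$ and reassembling under the $z$-integral yields $A_m\prod_j\|f_j\|$ multiplied by a residual factor $\omega_1(B(|a|_h,R))^{\sum_j(\lambda_j+1/q_j)-(\lambda+1/q)}$; this last factor equals $1$ precisely when $\sum_j\lambda_j=\lambda$, which is a consequence of the compatibility $q\lambda=q_j\lambda_j$ implicit in (3).

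For sharpness, under $\alpha\neq-Q$, $-1/q_j<\lambda_j<0$, and $q\lambda=q_j\lambda_j$, I test on the radial power functions $f_j(y)=|y|_h^{c_j}$ with $c_j:=Q\lambda_j-\gamma_j/q+\alpha(\lambda_j+1/q_j)$. The same dilation change of variables applied directly to $P^h_m$ gives $P^h_m(f_1,\ldots,f_m)(x)=A_m|x|_h^{\,\sum_j c_j}$, and the hypothesis $q\lambda=q_j\lambda_j$ together with $\sum 1/q_j=1/q$ and $\sum\gamma_j=\gamma$ forces $\sum_j c_j=Q\lambda-\gamma/q+\alpha(\lambda+1/q)$, exactly the critical exponent for the target space. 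An explicit computation on balls centered at the origin gives $\||x|_h^{c_j}\|_{L^{q_j,\lambda_j}}=\omega_Q^{-\lambda_j}(\alpha+Q)^{\lambda_j}(q_j\lambda_j+1)^{-1/q_j}$, and the identity $(q\lambda+1)^{-1/q}=\prod_j(q_j\lambda_j+1)^{-1/q_j}$ (from $q\lambda=q_j\lambda_j$ and $\sum 1/q_j=1/q$) collapses the product of these norms so that $\||x|_h^{\sum_j c_j}\|_{L^{q,\lambda}}=\prod_j\||x|_h^{c_j}\|_{L^{q_j,\lambda_j}}$, yielding $\|P^h_m(f_1,\ldots,f_m)\|/\prod_j\|f_j\|=A_m$ and matching the upper bound.

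The main technical obstacle is the exponent bookkeeping across the two successive Heisenberg dilations: I must verify that the power of $|z_j|_h$ produced by the inner substitution and the Morrey-norm estimate is precisely the exponent appearing in the integrand of $A_m$, so that the outer $z$-integral reassembles into the constant $A_m$ with no residual scale-dependence in $R$ or $|a|_h$. A second, finer point needed for sharpness is justifying that the Morrey norm of a homogeneous test function $|x|_h^{c_j}$ of critical degree is attained at balls centered at the origin; this follows from the scaling invariance $\Psi(\delta_r x_0,rR)=\Psi(x_0,R)$ of the associated ratio combined with the sign condition $\lambda_j<0$, which drives the supremum into the $|a|_h=0$ regime.
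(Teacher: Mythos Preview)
Your proposal is correct and follows essentially the same route as the paper: the dilation $y_j=\delta_{|x|_h}(z_j)$, then Minkowski and H\"older, then the Morrey-norm scaling identity (which the paper isolates as a separate lemma, while you redo it inline via $u=\delta_{|z_j|_h}(x)$) for the upper bound, and the identical power-function test $f_j(x)=|x|_h^{c_j}$ for sharpness. The only structural difference is that the paper first averages each $f_j$ over the Heisenberg sphere to reduce general inputs to radial ones, which you omit since the hypothesis already assumes radiality; conversely you are more careful than the paper about the compatibility $\sum_j\lambda_j=\lambda$ needed to kill the residual factor, the explicit value of $\|\,|x|_h^{c_j}\|_{L^{q_j,\lambda_j}}$, and the location of the Morrey supremum---all points the paper simply asserts.
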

\begin{cor}\label{main_2}
	Let $1\leq q<\infty, -1/q\leq\lambda<0, 1<q_j<\infty, 1/q=1/q_1+\cdots+1/q_m, \gamma=\gamma_1+\cdots+\gamma_m, -1/q_j\leq\lambda_j<0$ with $j=1,\ldots,m$. Then the operator $P^h_m$ is bounded from $\prod_{j=1}^m L^{q_j,\lambda_j}(\mathbb{H}^n, |x|_h^\alpha,|x|_h^{\frac{q_j\gamma_j}{q}})$ to $L^{q,\lambda}(\mathbb{H}^n, |x|_h^\alpha, |x|_h^\gamma)$ if and only if (\ref{main1}) holds. Moreover, if (\ref{main1}) holds, then the following formula holds
	$$
	A_m=\frac{mQ\omega_Q^{m}}{[Q\lambda-\frac{\gamma}{q}+\alpha(\lambda+\frac{1}{q})]\begin{matrix}\prod_{i=1}^m\end{matrix}[Q(1-\lambda_i)+\frac{\gamma_i}{q}-\alpha(\lambda_i+\frac{1}{q_i})]}.
	$$
\end{cor}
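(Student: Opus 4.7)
The proof splits into the boundedness characterization and the closed-form evaluation of $A_m$.

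For the iff, sufficiency is essentially Theorem \ref{main_1}: when $A_m<\infty$, its proof delivers the estimate $\|P^h_m(f_1,\ldots,f_m)\|_{L^{q,\lambda}}\le A_m\prod_j\|f_j\|_{L^{q_j,\lambda_j}}$ for general (not merely radial) $f_j$, the radial hypothesis being needed only to identify $A_m$ as the sharp constant. For necessity, I would test the assumed operator inequality against a one-parameter family of truncated radial power functions $f_j(y)=|y|_h^{a_j}\chi_{E_R}(y)$, with exponents $a_j$ tuned so that both sides can be computed cleanly through the definition of the Morrey norm; letting $R\to\infty$, the resulting lower bound on the operator norm approaches $A_m$, which must therefore be finite.

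For the explicit formula, I would evaluate $A_m$ directly. Polar coordinates on each $\mathbb{H}^n$ factor produce a factor $\omega_Q$ per variable (the integrand being radial in each $y_j$) and reduce $A_m$ to an iterated one-dimensional integral in the radii $r_j=|y_j|_h$. I would then partition the radial domain into $m+1$ pieces according to which of $1,r_1^Q,\ldots,r_m^Q$ is maximal: a central region $D_0=\{r_j<1\text{ for all }j\}$ (where the max equals $1$) and $m$ outer regions $D_i=\{r_i\ge 1,\ r_i\ge r_j\text{ for }j\ne i\}$ (where the max equals $r_i^Q$). On each piece the integrand separates as a product of one-variable power integrals evaluated by elementary means. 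Writing $\beta_j$ for the exponent of $|y_j|_h$ in the numerator and summing the $m+1$ contributions, the identity $\sum_i(\beta_i+Q)-\sum_j\beta_j=mQ$ makes the sum telescope into a single compact fraction; identifying $\sum_j\lambda_j=\lambda$ via the compatibility $q\lambda=q_j\lambda_j$ and matching signs, one obtains the stated expression.

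The main obstacle will be the convergence bookkeeping. Each elementary integral enforces a sign constraint on the exponents (namely $\beta_j+Q>0$ at the origin for each $j$, and $\sum_j\beta_j<0$ at infinity), and one must check that the finiteness of $A_m$ is exactly equivalent to this set of constraints before the closed-form expression can be read off. A secondary obstacle is ensuring that the test functions chosen in the necessity part genuinely realize Morrey norms that can be computed and compared cleanly with the operator-side integral as $R\to\infty$.
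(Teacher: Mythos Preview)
Your approach to the closed-form evaluation of $A_m$ is essentially the paper's: partition the radial domain into the central region where all $|y_j|_h<1$ and the $m$ outer regions where one $|y_j|_h$ dominates, compute each piece as a product of elementary power integrals, and sum. The paper actually carries this out first for $m=2$ (writing the three pieces $I_0,I_1,I_2$ explicitly) and then redoes the general case, whereas you go straight to the general decomposition; otherwise the computations coincide.

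Where you differ is on the ``if and only if'' part. The paper's proof of the corollary does not address necessity at all---it simply computes $A_m$ and relies on Theorem~\ref{main_1} for the bound. Your plan to test against truncated radial powers $f_j=|y|_h^{a_j}\chi_{E_R}$ and let $R\to\infty$ is the natural way to fill that gap, and your remark about the convergence bookkeeping (that finiteness of $A_m$ is exactly the conjunction of the sign constraints $\beta_j+Q>0$ and $\sum_j\beta_j<0$) is more than the paper provides. So your proposal is correct and, on the boundedness characterization, more complete than the original.
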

For the convenience of proving Theorem \ref{main_1}, we need to give an important lemma.
\begin{lem}\label{main 101}
	Let $1\leq q<\infty$,$-\frac{1}{q}\leq\lambda<0$ and $\alpha,\gamma\in\mathbb{R}$. If $t\in\mathbb{H}^n$ and $f\in L^{q,\lambda}(\mathbb{H}^n,|x|_h^\alpha,|x|_h^\gamma)$ , then we have
	\begin{equation}
		\|f(\delta_{|t|_h} \cdot)\|_{L^{q,\lambda}(\mathbb{H}^n,|x|_h^\alpha,|x|_h^\gamma)}=|t|_h^{Q\lambda-\frac{\gamma}{q}+\alpha(\lambda+\frac{1}{q})}\|f\|_{L^{q,\lambda}(\mathbb{H}^n,|x|_h^\alpha,|x|_h^\gamma)}.
	\end{equation}
	\begin{proof}
		$$
		\begin{aligned}
			& \|f(\delta_{|t|_h} \cdot)\|_{L^{q, \lambda}(\mathbb{H}^n,|x|_h^\alpha,|x|_h^{\gamma})} \\
			=&\sup _{a \in \mathbb{H}^n, R>0}\left(\int_{B(|a|_h, R)}|x|_h^\alpha d x\right)^{-(\lambda+\frac{1}{q})}\left(\int_{B(|a|_h, R)}|f(\delta_{|t|_h} x)|^q|x|_h^\gamma d x\right)^{\frac{1}{q}} \\
			=& \sup _{a \in \mathbb{H}^n, R>0}\left(\int_{B(|a|_h, R)}|x|_h^\alpha d x\right)^{-(\lambda+\frac{1}{q})}\left(\int_{B( |a|_h, R)}|f(\delta _{|t|_h}x)|^q||t|_h x|^\gamma |t|_h^{-\gamma}d x\right)^{\frac{1}{q}} \\
			=&  |t|_h^{-\frac{Q}{q}-\frac{\gamma}{q}}\sup _{a \in \mathbb{H}^n, R>0}\left(\int_{B(|a|_h, R)}|x|_h^\alpha d x\right)^{-(\lambda+\frac{1}{q})}\left(\int_{B(|t\alpha|_h, |t|_h R)}|f(x)|^q|x|_h^\gamma d x\right)^{\frac{1}{q}} \\
			=&  |t|_h^{-\frac{Q}{q}-\frac{\gamma}{q}}\sup _{a \in \mathbb{H}^n, R>0}\left(\int_{B(|a|_h, R)}||t|_hx|_h^\alpha|t|_h^{-\alpha} d x\right)^{-(\lambda+\frac{1}{q})}\left(\int_{B(|t\alpha|_h, |t|_h R)}|f(x)|^q|x|_h^\gamma d x\right)^{\frac{1}{q}} \\
			=&  |t|_h^{Q\lambda-\frac{\gamma}{q}+\alpha(\lambda+\frac{1}{q})}\sup _{a \in \mathbb{H}^n, R>0}\left(\int_{B(|ta|_h, |t|_h R)}|x|_h^\alpha d x\right)^{-(\lambda+\frac{1}{q})}\left(\int_{B(|t\alpha|_h, |t|_h R)}|f(x)|^q|x|_h^\gamma d x\right)^{\frac{1}{q}}\\
			=& |t|_h^{Q\lambda-\frac{\gamma}{q}+\alpha(\lambda+\frac{1}{q})}\|f\|_{L^{q,\lambda}(\mathbb{H}^n,|x|_h^\alpha,|x|_h^\gamma)}.
		\end{aligned}
		$$
		This finishes the proof of Lemma \ref{main 101}.
	\end{proof}
\end{lem}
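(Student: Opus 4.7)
The plan is to reduce the claim to a single change of variables on $\mathbb{H}^n$ induced by the dilation $\delta_{|t|_h}$, tracking how each of the three ingredients of the Morrey norm---the weighted $L^q$ integral, the normalizing weight integral in the denominator, and the ball of integration---rescales. The two facts from the preamble that do all the work are the homogeneity $|\delta_{|t|_h} x|_h = |t|_h |x|_h$ and the Jacobian identity $d(\delta_{|t|_h} x) = |t|_h^Q dx$, together with the observation that $\delta_{|t|_h}$ maps $B(a, R)$ onto $B(\delta_{|t|_h} a, |t|_h R)$.

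First I would apply the substitution $y = \delta_{|t|_h} x$ inside
$$
\int_{B(|a|_h, R)} |f(\delta_{|t|_h} x)|^q |x|_h^\gamma\, dx,
$$
which produces a combined factor $|t|_h^{-\gamma - Q}$ from the weight and the Jacobian and moves the ball to $B(|\delta_{|t|_h} a|_h, |t|_h R)$. Raising the inner integral to the $1/q$ power yields the prefactor $|t|_h^{-\gamma/q - Q/q}$. I would then perform the same substitution in the denominator integral $\int_{B(|a|_h, R)} |x|_h^\alpha\, dx$; because this integral appears raised to the exponent $-(\lambda + 1/q)$, its internal factor $|t|_h^{-\alpha - Q}$ is promoted to $|t|_h^{(\alpha + Q)(\lambda + 1/q)}$. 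Adding the two contributions gives a net prefactor of $|t|_h^{Q\lambda - \gamma/q + \alpha(\lambda + 1/q)}$, which matches the claimed exponent.

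What remains inside the supremum, after both substitutions, are two integrals over the common ball $B(|\delta_{|t|_h} a|_h, |t|_h R)$ with the correct $|y|_h^\alpha$ and $|y|_h^\gamma$ weights, and the dilation has disappeared from the argument of $f$. Since the supremum is taken over all $a \in \mathbb{H}^n$ and $R > 0$, the reparametrization $a' = \delta_{|t|_h} a$, $R' = |t|_h R$ is a bijection of the index set, so what remains is exactly $\|f\|_{L^{q,\lambda}(\mathbb{H}^n, |x|_h^\alpha, |x|_h^\gamma)}$. This closes the computation.

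There is no conceptual obstacle; the whole argument is bookkeeping, but the main risk is tracking $|t|_h$ powers separately for the two weights and the two Jacobian instances and handling the exponent $-(\lambda + 1/q)$ in the denominator with the correct sign. A secondary nuisance is the paper's notation $B(|a|_h, R)$, which I would read as a Heisenberg ball whose center scales as $|a|_h \mapsto |t|_h |a|_h$ under dilation---the only reading consistent with the supremum being preserved under the rescaling of parameters.
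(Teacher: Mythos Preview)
Your proposal is correct and follows essentially the same route as the paper: both arguments perform the substitution $y=\delta_{|t|_h}x$ in the inner $L^q$ integral and in the normalizing weight integral, collect the powers $|t|_h^{-\gamma/q-Q/q}$ and $|t|_h^{(\alpha+Q)(\lambda+1/q)}$, and then use the bijection $(a,R)\mapsto(\delta_{|t|_h}a,|t|_h R)$ to recognize the remaining supremum as $\|f\|_{L^{q,\lambda}(\mathbb{H}^n,|x|_h^\alpha,|x|_h^\gamma)}$. The only cosmetic difference is that the paper inserts $|t|_h^{-\gamma}||t|_hx|_h^\gamma$ and $|t|_h^{-\alpha}||t|_hx|_h^\alpha$ before substituting, whereas you substitute directly; the bookkeeping and the final exponent are identical.
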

\begin{proof}[Proof of Theorem \ref{main_1}]
	Inspired by \cite{HQ}, we learn the method that set radial functions. These functions can solve this problem that calculates the sharp bound of $m$-linear $n$-dimensional Hardy-Littlewood-P\'{o}lya operator in two power weighted Morrey space on Heisenberg group.
	First, we set
	$$
	g_j(x)=\frac{1}{\omega_Q} \int_{|\xi_j|_h=1} f_j(\delta_{|x|_h} \xi_j) d\xi_j, \quad x \in \mathbb{H}^n,
	$$
	where $\omega_n=2\pi^{n/2}$ and $j=1,\ldots,m$. Obviously, $g_j$ ($j=1,\ldots,m$) is radical functions. Then we can obtain
	$$
	P_m^h(g_{f_1},\ldots,g_{f_m})(x)=P_m^h(f_1,\ldots,f_m)(x).
	$$
	Using Minkowski's inequality and  H\"{o}lder's inequality, for $j=1,\ldots,m$, we have
	$$
	\begin{aligned}
		&\|g_j\|_{L^{q_j,\lambda_j}(\mathbb{H}^n,|x|_h^\alpha,|x|_h^{\frac{q_j,\gamma_j}{q}})}\\
		=&\frac{1}{\omega_Q}\sup_{a\in\mathbb{H}^n,R>0}\left(\int_{B(|a|_h,R)}|x|_h^\alpha d x\right)^{-(\lambda+\frac{1}{q})}\left(\int_{B(|a|_h,R)}\left|\int_{|\xi_j|_h=1}f_j(\delta_{|x|_h}\xi_j)d \xi_j\right|^q|x|_h^{\frac{q_j\gamma_j}{q}}dx\right)^{\frac{1}{q}}\\
		\leq&\frac{1}{\omega_Q}\sup_{a\in\mathbb{H}^n,R>0}\left(\int_{B(|a|_h,R)}|x|_h^\alpha d x\right)^{-(\lambda+\frac{1}{q})}\int_{|\xi_j|_h=1}\left(\int_{B(|a|_h,R)}|f_j(\delta_{|x|_h}\xi_j)| ^q |x|_h^\frac{q_j\gamma_j}{q} d x \right)^{\frac{1}{q}} d \xi_j\\
		\leq&\left(\int_{B(|a|_h,R)}|x|_h^\alpha dx\right)^{-(\lambda+\frac{1}{q})}\left(\frac{1}{\omega_Q}\int_{|\xi_j|_h=1}\int_{B(|a|_h,R)}|f_j(\delta_{|x|_h}\xi_j)|^q |x|^{\frac{q_j\gamma_j}{q}}dx d \xi_j\right)^{\frac{1}{q}}\\
		=&\|f_j\|_{L^{q_j,\lambda_j}(\mathbb{H}^n,|x|_h^\alpha,|x|_h^{\frac{q_j\gamma_j}{q}})}.
	\end{aligned}
	$$
	Thus we have
	$$
	\frac{\|P^h_m(f_1,\ldots,f_m)\|_{L^{q,\lambda}(\mathbb{H}^n,|x|_h^\alpha,|x|_h^\gamma)}}{\prod_{j=1}^m\|f_j\|_{L^{q_j,\lambda_j}(\mathbb{H}^n,|x|_h^\alpha,|x|_h^{\frac{q_j\gamma_j}{q}})}}
	\leq\frac{\|P^h_m(g_1,\ldots,g_m)\|_{L^{q,\lambda}(\mathbb{H}^n,|x|_h^\alpha,|x|_h^\gamma)}}{\prod_{j=1}^m\|g_j\|_{L^{q_j,\lambda_j}(\mathbb{H}^n|x|_h^\alpha,|x|_h^{\frac{q_j\gamma_j}{q}})}},
	$$
	where implies the operator $P^h_m$ and its restriction to radial functions have the same operator norm in two power weighted Morrey space on Heisenberg group. So, without loss of generality, we assume that $f_j$, $j=1,\ldots,m$, are radial functions in the rest of the proof.
	Then by Minkowski's inequality, H\"{o}lder's inequality and Lemma \ref{main 101}, we can easily get
	$$
	\begin{aligned}
		&\|P_m^h(f_1,\ldots,f_m)\|_{L^{q,\lambda}(\mathbb{H}^n,|x|_h^\alpha,|x|_h^\gamma)}\\
		\leq&\int_{\mathbb{H}^{nm}}\frac{\prod_{j=1}^{m}|y_j|^{Q\lambda_j-\frac{\gamma_j}{q}+\alpha(\lambda_j+\frac{1}{q_j})}} {[\max(1,|y_1|_h^Q,\ldots,|y_m|_h^Q)]^m}d y_1 \cdots d y_m\prod_{j=1}^m\|f_j\|_{L^{q_j, \lambda}(\mathbb{H}^n,|x|_h^\alpha,|x|_h^{\frac{q_j \gamma_j}{q}})}\\
		=&A_m\prod_{j=1}^m\|f_j\|_{L^{q_j, \lambda}(\mathbb{H}^n,|x|_h^\alpha,|x|_h^{\frac{q_j \gamma_j}{q}})}.
	\end{aligned}
	$$
	Taking
	$$
	f_j(x)=|x|_h^{Q\lambda_j-\frac{\gamma_j}{q}+\alpha(\lambda_j+\frac{1}{q_j})}, j=1,\ldots,m,
	$$
	we have
	$$
	\|P_m^h(f_1,\ldots,f_m)\|_{L^{q,\lambda}(\mathbb{H}^n,|x|_h^\alpha,|x|_h^\gamma)}=A_m\prod_{j=1}^m\|f_j\|_{L^{q_j, \lambda}(\mathbb{H}^n,|x|_h^\alpha,|x|_h^{\frac{q_j \gamma_j}{q}})}.
	$$
	Thus, the proof of Theorem \ref{main_1} is finished.
\end{proof}
Now, we give a detailed proof of Corollary \ref{main_2}.
\begin{proof}[Proof of Corollary \ref{main_2}]
	In order to obtain the sharp constant, we learned and borrowed the method that are given by \cite{ZL}. For
	convenience, we take several special cases for separate calculations, the general situation is a summary of these special cases.\\
	\textbf{Case 1 when $m=2$}.\\
	In this case,we have
	$$
	P_2^h=\int_{\mathbb{H}_n}\int_{\mathbb{H}_n}\frac{|y_1|_h^{-Q\lambda_1+\frac{\gamma_1}{q}-\alpha(\lambda_1+\frac{1}{q_1})}|y_2|_h^{-Q\lambda_2+\frac{\gamma_2}{q}-\alpha(\lambda_2+\frac{1}{q_2})}}{[\max(1,|y_1|_h^Q,|y_2|_h^Q)]^2}dy_1dy_2.
	$$
	By calculation, we have
	$$
	\begin{aligned}
		&\int_{\mathbb{H}_n}\int_{\mathbb{H}_n}\frac{|y_1|_h^{-Q\lambda_1+\frac{\gamma_1}{q}-\alpha(\lambda_1+\frac{1}{q_1})}|y_2|_h^{-Q\lambda_2+\frac{\gamma_2}{q}-\alpha(\lambda_2+\frac{1}{q_2})}}{[\max(1,|y_1|_h^Q,|y_2|_h^Q)]^2}dy_1dy_2\\
		=&\int_{|y_1|_h<1}\int_{|y_2|_h<1}|y_1|_h^{-Q\lambda_1+\frac{\gamma_1}{q}-\alpha(\lambda_1+\frac{1}{q_1})}|y_2|_h^{-Q\lambda_2+\frac{\gamma_2}{q}-\alpha(\lambda_2+\frac{1}{q_2})}dy_1dy_2\\
		+&\int_{|y_1|_h>1}\int_{|y_2|_h<|y_1|_h}|y_1|_h^{-Q\lambda_1+\frac{\gamma_1}{q}-\alpha(\lambda_1+\frac{1}{q_1})-2Q}|y_2|_h^{-Q\lambda_2+\frac{\gamma_2}{q}-\alpha(\lambda_2+\frac{1}{q_2})}dy_1dy_2\\
		+&\int_{|y_2|_h>1}\int_{|y_1|_h<|y_2|_h}|y_1|_h^{-Q\lambda_1+\frac{\gamma_1}{q}-\alpha(\lambda_1+\frac{1}{q_1})}|y_2|_h^{-Q\lambda_2+\frac{\gamma_2}{q}-\alpha(\lambda_2+\frac{1}{q_2})-2Q}dy_1dy_2\\
		=&I_0+I_1+I_2.
	\end{aligned}
	$$
	$$
	\begin{aligned}
		I_0=&\int_{|y_1|_h<1}\int_{|y_2|_h<1}|y_1|_h^{-Q\lambda_1+\frac{\gamma_1}{q}-\alpha(\lambda_1+\frac{1}{q_1})}|y_2|_h^{-Q\lambda_2+\frac{\gamma_2}{q}-\alpha(\lambda_2+\frac{1}{q_2})}dy_1dy_2\\
		=&\frac{\omega_Q^2}{[Q(1-\lambda_1)+\frac{\gamma_1}{q}-\alpha(\lambda_1+\frac{1}{q_1})][Q(1-\lambda_2)+\frac{\gamma_2}{q}-\alpha(\lambda_2+\frac{1}{q_2})]},
	\end{aligned}
	$$
	$$
	\begin{aligned}
		I_1=&\int_{|y_1|_h>1}\int_{|y_2|_h<|y_1|_h}|y_1|_h^{-Q\lambda_1+\frac{\gamma_1}{q}-\alpha(\lambda_1+\frac{1}{q_1})-2Q}|y_2|_h^{-Q\lambda_2+\frac{\gamma_2}{q}-\alpha(\lambda_2+\frac{1}{q_2})}dy_1dy_2\\
		=&\frac{\omega_Q}{Q(1-\lambda_2)+\frac{\gamma_2}{q}-\alpha(\lambda_2+\frac{1}{q_2})}\int_{|y_1|_h>1}|y_1|^{-Q(1+\lambda)+\frac{\gamma}{q}-\alpha(\lambda+q)}dy_1\\
		=&\frac{\omega_Q^2}{[-Q\lambda+\frac{\gamma}{q}-\alpha(\lambda+q)][Q(1-\lambda_2)+\frac{\gamma_2}{q}-\alpha(\lambda_2+\frac{1}{q_2})]}.
	\end{aligned}
	$$
	Similarly, we obtain
	$$
	\begin{aligned}
		I_2=&\int_{|y_2|_h>1}\int_{|y_1|_h<|y_2|_h}|y_1|_h^{-Q\lambda_1+\frac{\gamma_1}{q}-\alpha(\lambda_1+\frac{1}{q_1})}|y_2|_h^{-Q\lambda_2+\frac{\gamma_2}{q}-\alpha(\lambda_2+\frac{1}{q_2})-2Q}dy_1dy_2\\
		=&\frac{\omega_Q}{Q(1-\lambda_1)+\frac{\gamma_1}{q}-\alpha(\lambda_1+\frac{1}{q_1})}\int_{|y_2|_h>1}|y_2|^{-Q(1+\lambda)+\frac{\gamma}{q}-\alpha(\lambda+q)}dy_2\\
		=&\frac{\omega_Q^2}{[-Q\lambda+\frac{\gamma}{q}-\alpha(\lambda+q)][Q(1-\lambda_1)+\frac{\gamma_1}{q}-\alpha(\lambda_1+\frac{1}{q_1})]}.
	\end{aligned}
	$$
	Thus we have
	$$
	\begin{aligned}
		&\int_{\mathbb{H}_n}\int_{\mathbb{H}_n}\frac{|y_1|_h^{-Q\lambda_1+\frac{\gamma_1}{q}-\alpha(\lambda_1+\frac{1}{q_1})}|y_2|_h^{-Q\lambda_2+\frac{\gamma_2}{q}-\alpha(\lambda_2+\frac{1}{q_2})}}{[\max(1,|y_1|_h^Q,|y_2|_h^Q)]^2}dy_1dy_2\\
		=&I_0+I_1+I_2\\
		=&\frac{2Q\omega_Q^2}{[-Q\lambda+\frac{\gamma}{q}-\alpha(\lambda+q)][Q(1-\lambda_1)+\frac{\gamma_1}{q}-\alpha(\lambda_1+\frac{1}{q_1})][Q(1-\lambda_2)+\frac{\gamma_2}{q}-\alpha(\lambda_2+\frac{1}{q_2})]}.
	\end{aligned}
	$$
	\textbf{Case 2 when $m\geq 3$}.
	
	Let
	$$
	\begin{aligned}
		&E_0=\{(y_1,\ldots,y_m)\in\mathbb{H}^n\times\cdots\times\mathbb{H}^n:|y_k|_h\leq 1,1\leq k\leq m\};\\
		&E_1=\{(y_1,\ldots,y_m)\in\mathbb{H}^n\times\cdots\times\mathbb{H}^n:|y_1|_h\ge 1,|y_k|_h\leq|y_1|_h,2\leq k\leq m\};\\
		&E_2=\{(y_1,\ldots,y_m)\in\mathbb{H}^n\times\cdots\times\mathbb{H}^n:|y_i|_h\leq 1,|y_j|_h\leq|y_k|_h,|y_k|_h\leq|y_i|_h,1\leq j\leq i\leq k\leq m\};\\
		&E_3=\{(y_1,\ldots,y_m)\in\mathbb{H}^n\times\cdots\times\mathbb{H}^n:|y_m|_h\ge 1,|y_j|_h\leq|y_m|_h,1\leq j\leq m\}.
	\end{aligned}
	$$
	Obviously, we have
	$$
	\bigcup_{j=0}^m E_j=\mathbb{H}^n\times\cdots\times\mathbb{H}^n,E_i\cap E_j=\varnothing.
	$$
	Taking
	$$
	K_j=\int_{\mathbb{H}^{nm}}\frac{\begin{matrix}\prod_{i=1}^m\end{matrix}|y_i|_h^{-Q\lambda_i+\frac{\gamma_i}{q}-\alpha(\lambda_i+\frac{1}{q_i})}}{[\max(1,|y_1|_h^Q,\ldots,|y_m|_h^Q)]^m}dy_1\cdots dy_m,
	$$
	we can calculate that $K_j$ and the above cases are the same with $j=0,1,\ldots,m$.
	$$
	\begin{aligned}
		K_0=&\prod_{i=1}^m\int_{|y_i|\leq 1}|y_i|_h^{-Q\lambda_i+\frac{\gamma_i}{q}-\alpha(\lambda_i+\frac{1}{q_i})}dy_i\\
		=&\frac{\omega_Q^m}{\prod_{i=1}^m[Q(1-\lambda_i)+\frac{\gamma_i}{q}-\alpha(\lambda_i+\frac{1}{q_i})]},
	\end{aligned}
	$$
	$$
	\begin{aligned}
		K_1=&\int_{|y_1|>1}|y_1|_h^{-Q(\lambda_1+m)+\frac{\gamma_1}{q}-\alpha(\lambda_1+\frac{1}{q_1})}dy_1\prod_{i=2}^m\int_{|y_i|_h\leq |y_1|_h}|y_i|_h^{-Q\lambda_i+\frac{\gamma_i}{q}-\alpha(\lambda_i+\frac{1}{q_i})}dy_i\\
		=&\frac{\omega_Q^{m-1}}{\begin{matrix}\prod_{i=2}^m\end{matrix}[Q(1-\lambda_i)+\frac{\gamma_i}{q}-\alpha(\lambda_i+\frac{1}{q_i})]}
		\int_{|y_1|_h>1}|y_1|_h^{-Q(\lambda_1+m)+\frac{\gamma_1}{q}-\alpha(\lambda_1+\frac{1}{q_1})}dy_1\\
		=&\frac{\omega_Q^{m}}{[Q\lambda-\frac{\gamma}{q}+\alpha(\lambda+\frac{1}{q})]\begin{matrix}\prod_{i=2}^m\end{matrix}[Q(1-\lambda_i)+\frac{\gamma_i}{q}-\alpha(\lambda_i+\frac{1}{q_i})]}.
	\end{aligned}
	$$
	So we can deduce that
	$$
	K_j=\frac{\omega_Q^{m}}{[Q\lambda-\frac{\gamma}{q}+\alpha(\lambda+\frac{1}{q})]\begin{matrix}\prod_{1\leq i\leq m,i\neq j}^m\end{matrix}[Q(1-\lambda_i)+\frac{\gamma_i}{q}-\alpha(\lambda_i+\frac{1}{q_i})]}.
	$$
	Then we have obtained that
	$$
	K_m=\frac{mQ\omega_Q^{m}}{[Q\lambda-\frac{\gamma}{q}+\alpha(\lambda+\frac{1}{q})]\begin{matrix}\prod_{i=1}^m\end{matrix}[Q(1-\lambda_i)+\frac{\gamma_i}{q}-\alpha(\lambda_i+\frac{1}{q_i})]}.
	$$
	Combining the above two cases, we have completed the Corollary \ref{main_2}.
\end{proof}
\section{Sharp bound for Hilbert operator}
In this section, we will study the sharp bound of $m$-liner $n$-dimensional Hilbert operator in Morrey space on Heisenberg group. Based on the following introduction, we can easily obtain and prove the following theorem.

Now, we give our main results as follows.
\begin{thm}\label{2}
	Let $f_i$ be radial functions in $L^{q_j,\lambda}(\mathbb{H}^n,|x|_h^\alpha,|x|^\frac{q_j\gamma_j}{q})$, $1\leq q<\infty$, $-\frac{1}{q}\leq\lambda<0$, $1<q_j<\infty$, $\frac{1}{q}=\frac{1}{q_1}+\cdots+\frac{1}{q_m}$, $\gamma=\gamma_1\cdots+\gamma_m$, $-\frac{1}{q_j} \leq \lambda_j<0 \text { with } j=1, \ldots, m.$
	Then we have
	\begin{equation}
		\|P^{h*}_m(f_1,\ldots,f_m)\|_{L^{q,\lambda}(\mathbb{H}^n,|x|_h^\alpha,|x|_h^\gamma)}\leq B_m\prod_{j=1}^m\|f_j\|_{L^{q_j, \lambda}_j(\mathbb{H}^n,|x|_h^\alpha,|x|_h^{\frac{q_j \gamma_j}{q}})},
	\end{equation}
	where
	\begin{equation}\label{2.2}
		B_m=\int_{\mathbb{H}^{nm}}\frac{\prod_{j=1}^{m}|y_j|^{Q\lambda_j-\frac{\gamma_j}{q}+\alpha(\lambda_j+\frac{1}{q_j})}} {1+|y_1|_h^Q+\cdots+|y_m|_h^Q}d y_1 \cdots d y_m.
	\end{equation}
	Moreover, if $\alpha\neq -Q,-\frac{1}{q_j}<\lambda_j<0$, and $q\lambda=q_j\lambda_j$ with $j=1,\ldots,m$, then we have
	\begin{equation}
		\|P^{h*}_m(f_1,\ldots,f_m)\|_{\prod_{j=1}^m L^{q_j,\lambda_j}(\mathbb{H}^n,|x|_h^\alpha,|x|_h^{\frac{q_j\gamma_j}{q}})\rightarrow L^{q,\lambda}(\mathbb{H}^n,|x|_h^\alpha,|x|_h^\gamma)}= B_m.
	\end{equation}
\end{thm}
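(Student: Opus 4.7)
The plan is to reuse the template of Theorem \ref{main_1} almost verbatim, since the only structural property of the Hardy-Littlewood-P\'olya kernel exploited in that proof was its dependence on $(|x|_h,|y_1|_h,\ldots,|y_m|_h)$ alone, and the Hilbert kernel $(|x|_h^Q+|y_1|_h^Q+\cdots+|y_m|_h^Q)^{-m}$ shares this radial symmetry. The substitution of the max by a sum changes the explicit value of the integral \eqref{2.2} but not the chain of inequalities leading to it.

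First I would reduce to radial $f_j$ by setting
$$g_j(x)=\frac{1}{\omega_Q}\int_{|\xi_j|_h=1}f_j(\delta_{|x|_h}\xi_j)\,d\xi_j.$$
Because the Hilbert kernel depends on $y_j$ only through $|y_j|_h$, a Fubini argument on the sphere yields $P^{h*}_m(g_1,\ldots,g_m)(x)=P^{h*}_m(f_1,\ldots,f_m)(x)$, while Minkowski's inequality combined with H\"older's inequality inside the sphere integral gives $\|g_j\|_{L^{q_j,\lambda_j}(\mathbb{H}^n,|x|_h^\alpha,|x|_h^{q_j\gamma_j/q})}\leq\|f_j\|_{L^{q_j,\lambda_j}(\mathbb{H}^n,|x|_h^\alpha,|x|_h^{q_j\gamma_j/q})}$. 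This reduces the sharp-norm computation to radial inputs without modification.

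Assuming then that each $f_j$ is radial, I would apply Minkowski's integral inequality to pull the $L^{q,\lambda}$ norm inside the $y_1,\ldots,y_m$ integration:
$$\|P^{h*}_m(f_1,\ldots,f_m)\|_{L^{q,\lambda}(\mathbb{H}^n,|x|_h^\alpha,|x|_h^\gamma)}\leq\int_{\mathbb{H}^{nm}}\frac{\bigl\|\prod_{j=1}^m f_j(\delta_{|y_j|_h}\cdot)\bigr\|_{L^{q,\lambda}(\mathbb{H}^n,|x|_h^\alpha,|x|_h^\gamma)}}{(1+|y_1|_h^Q+\cdots+|y_m|_h^Q)^m}\,dy_1\cdots dy_m,$$
where I have used that each $f_j(y_j)$ equals $f_j(\delta_{|y_j|_h}\xi_j)$ for any $\xi_j$ on the unit sphere. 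A H\"older split across the $m$ factors and the dilation identity of Lemma \ref{main 101} then turn the $j$-th factor into $|y_j|_h^{Q\lambda_j-\gamma_j/q+\alpha(\lambda_j+1/q_j)}\|f_j\|_{L^{q_j,\lambda_j}}$, which yields the upper bound $B_m\prod_{j=1}^m\|f_j\|_{L^{q_j,\lambda_j}}$.

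For sharpness under $\alpha\neq -Q$, $-1/q_j<\lambda_j<0$ and $q\lambda=q_j\lambda_j$, I would test against the power extremizers $f_j(x)=|x|_h^{Q\lambda_j-\gamma_j/q+\alpha(\lambda_j+1/q_j)}$ just as in Theorem \ref{main_1}: the Minkowski step collapses because the $f_j$ are already radial, and the H\"older step collapses because the $j$-dependence of each factor is now a pure power of $|y_j|_h$ matching the extremality condition, so both inequalities saturate simultaneously. The main delicate point, not a serious obstacle but the one I would verify carefully, is that the parameter constraints guarantee $B_m<\infty$ and $\|f_j\|_{L^{q_j,\lambda_j}}\in(0,\infty)$ for these test functions, which is exactly the role played by $\alpha\neq -Q$ and the strict inequalities on $\lambda_j$; granted these, one obtains the equality $\|P^{h*}_m(f_1,\ldots,f_m)\|_{L^{q,\lambda}}=B_m\prod_{j=1}^m\|f_j\|_{L^{q_j,\lambda_j}}$ and hence the operator-norm identity.
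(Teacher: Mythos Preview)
Your proposal is correct and matches the paper's approach exactly: the paper's proof of this theorem consists of the single sentence ``This proof is similar to the proof of Theorem~\ref{main_1}, we omit the details,'' and you have faithfully reconstructed that argument---radial reduction, Minkowski plus H\"older combined with Lemma~\ref{main 101} for the upper bound, and the power extremizers $f_j(x)=|x|_h^{Q\lambda_j-\gamma_j/q+\alpha(\lambda_j+1/q_j)}$ for sharpness.
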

\begin{cor}\label{main_5}
	Let $1\leq q<\infty$, $-1/q\leq\lambda<0$, $1<q_j<\infty$, $1/q=1/q_1+\cdots+1/q_m$, $\gamma=\gamma_1+\cdots+\gamma_m$, $-1/q_j\leq\lambda_j<0$ with $j=1,\ldots,m$. Then the operator $P^{h*}_m$ is bounded from $\prod_{j=1}^m L^{q_j,\lambda_j}(\mathbb{H}^n, |x|_h^\alpha,|x|_h^{\frac{q_j\gamma_j}{q}})$ to $L^{q,\lambda}(\mathbb{H}^n, |x|_h^\alpha, |x|_h^\gamma)$ if and only if (\ref{2.2}) holds. Moreover, if (\ref{2.2}) holds, then the following formula holds
	$$
	B_m=\left(\frac{2\pi^{n+\frac{1}{2}\Gamma(n/2)}}{(n+1)\Gamma(n)\Gamma((n+1)/2)}\right)^m\frac{\begin{matrix}\prod_{i=1}^m\end{matrix}\Gamma\left(1-\frac{Q\lambda_i-\frac{\gamma_i}{q}+\alpha(\lambda_i+\frac{1}{q_i})}{Q}\right)\Gamma\left(\frac{Q\lambda-\frac{\gamma}{q}+\alpha(\lambda+\frac{1}{q})}{Q}\right)}{\Gamma\left(m\right)}.
	$$
\end{cor}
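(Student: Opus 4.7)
The plan is to prove Corollary \ref{main_5} in two stages: first establish the equivalence between boundedness of $P_m^{h*}$ and convergence of the integral $B_m$ in (\ref{2.2}), and then compute $B_m$ explicitly via a Dirichlet--Liouville type formula.

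The equivalence follows from Theorem \ref{2} combined with a test--function argument. Sufficiency is immediate: if $B_m<\infty$, then Theorem \ref{2} gives the operator bound $\|P_m^{h*}\|_{\prod L^{q_j,\lambda_j}\to L^{q,\lambda}}\le B_m$. For necessity, I would test on the radial extremising family $f_j(x)=|x|_h^{Q\lambda_j-\gamma_j/q+\alpha(\lambda_j+1/q_j)}$, suitably truncated so as to lie in the relevant Morrey spaces. Exactly as in the final lines of the proof of Theorem \ref{main_1}, the dilation substitution $y_j=\delta_{|x|_h}z_j$ combined with Lemma \ref{main 101} shows that along this family the ratio $\|P_m^{h*}(f_1,\dots,f_m)\|_{L^{q,\lambda}}/\prod_j\|f_j\|_{L^{q_j,\lambda_j}}$ realises $B_m$, so boundedness forces $B_m<\infty$.

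For the explicit evaluation, the key step is to reduce the $\mathbb{H}^{nm}$--integral to a Dirichlet--Liouville integral on $(0,\infty)^m$. Parameterising $y_j=\delta_{r_j}\xi_j$ with $|\xi_j|_h=1$ and $dy_j=r_j^{Q-1}\,dr_j\,d\sigma(\xi_j)$, the radial symmetry of the integrand lets each of the $m$ spherical integrations contribute a factor $\omega_Q$, giving an overall $\omega_Q^m$. The substitution $t_j=r_j^Q$ then produces
$$
B_m=\frac{\omega_Q^m}{Q^m}\int_{(0,\infty)^m}\frac{\prod_{j=1}^m t_j^{s_j-1}}{(1+t_1+\cdots+t_m)^m}\,dt_1\cdots dt_m,
$$
where the parameters $s_j$ are chosen so as to reproduce the Gamma arguments $1-[Q\lambda_j-\gamma_j/q+\alpha(\lambda_j+1/q_j)]/Q$ appearing in the target formula. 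Applying the classical identity
$$
\int_{(0,\infty)^m}\frac{\prod_{j=1}^m t_j^{s_j-1}}{(1+t_1+\cdots+t_m)^b}\,dt_1\cdots dt_m=\frac{\prod_{j=1}^m\Gamma(s_j)\,\Gamma\!\bigl(b-\sum_{j=1}^m s_j\bigr)}{\Gamma(b)}
$$
with $b=m$, together with $\omega_Q/Q=\Omega_Q$ and the explicit value of $\Omega_Q$ from the Introduction, yields the claimed product of Gamma functions after collecting terms.

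The main obstacle is verifying the convergence conditions of the Dirichlet integral so that all Gamma arguments are positive: one needs $s_j>0$ for every $j$ and $m-\sum_j s_j>0$. Unpacking these inequalities via $1/q=\sum 1/q_j$, $\gamma=\sum\gamma_j$ and the coupling $q\lambda=q_j\lambda_j$, they reduce exactly to the hypotheses $\alpha\ne -Q$ and $-1/q_j<\lambda_j<0$; these same conditions make the truncated extremiser argument in the necessity direction rigorous. A secondary point is that the denominator in (\ref{2.2}) must be read as raised to the $m$-th power, consistently with the definition of $P_m^{h*}$ and with the $\Gamma(m)$ appearing in the target formula.
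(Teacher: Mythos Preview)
Your approach is essentially the same as the paper's: pass to polar coordinates, substitute $t_j=r_j^Q$ to reduce $B_m$ to a Dirichlet integral on $(0,\infty)^m$, and evaluate in terms of Gamma functions. The only minor difference is that you invoke the Dirichlet--Liouville identity as a known formula, whereas the paper rederives it via the one--variable Beta integral and the inductive substitution $t_m=(1+t_1+\cdots+t_{m-1})q_m$; your explicit treatment of the necessity direction via test functions and your remark that the denominator in (\ref{2.2}) must be read with exponent $m$ are both correct and in fact more careful than the paper's own proof.
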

\begin{proof}[Proof of Theorem \ref{2}]
	This proof is similar to the proof of Theorem \ref{main_1}, we omit the details.
\end{proof}
\begin{proof}[Proof of Corollary \ref{main_5}]
	Now, we will follow the method given by B\'{e}nyi and Oh \cite{BO} to calculate the sharp constant of $m$-linear $n$-dimensional
	Hilbert operator. Using the polar transformations and variables substitution, we have
	$$
	\begin{aligned}
		&\|P_m^{h*}\|_{\begin{matrix}\prod_{i=1}^m\end{matrix}\mathbb{H}_{\alpha_i}^{\infty}(\mathbb{H}^n)\rightarrow \mathbb{H}_{\alpha}^{\infty}(\mathbb{H}^n)}\\
		=&\int_{\mathbb{H}_{nm}}\frac{|y_1|^{-Q\lambda_1+\frac{\gamma_1}{q}-\alpha(\lambda_1+\frac{1}{q_1})}\cdots |y_m|^{-Q\lambda_m+\frac{\gamma_m}{q}-\alpha(\lambda_m+\frac{1}{q_m})}}{(1+|y_1|_h^Q+\cdots+|y_m|_h^Q)}dy_1\ldots dy_m\\
		=&\omega_Q^m\int_0^{\infty}\cdots\int_0^{\infty}\frac{r_1^{Q(1-\lambda_1)+\frac{\gamma_1}{q}-\alpha(\lambda_1+\frac{1}{q_1})-1}\cdots r_m^{Q(1-\lambda_m)+\frac{\gamma_m}{q}-\alpha(\lambda_m+\frac{1}{q_m})-1}}{(1+r_1^Q+\cdots+r_m^Q)^m}dr_1\ldots dr_m\\
		=&\frac{\omega_Q^m}{Q^m}\int_0^{\infty}\cdots\int_0^{\infty}\frac{t_1^{\frac{1}{Q}[Q(1-\lambda_1)+\frac{\gamma_1}{q}-\alpha(\lambda_1+\frac{1}{q_1}]-1}\cdots t_m^{\frac{1}{Q}[Q(1-\lambda_m)+\frac{\gamma_m}{q}-\alpha(\lambda_m+\frac{1}{q_m}]-1}}{(1+t_1+\cdots+t_m)^m}dt_1\ldots dt_m\\
		=&\frac{\omega_Q^m}{Q^m}\int_0^{\infty}\cdots\int_0^{\infty}\frac{t_1^{-Q\lambda_1+\frac{\gamma_1}{q}-\alpha(\lambda_1+\frac{1}{q_1})}\cdots t_m^{-Q\lambda_m+\frac{\gamma_m}{q}-\alpha(\lambda_m+\frac{1}{q_m})}}{(1+t_1+\cdots+t_m)^m}dt_1\ldots dt_m\\
		=&\frac{\omega_Q^m}{Q^m}\int_0^{\infty}\cdots\int_0^{\infty}\frac{t_1^{-\beta_1}\cdots t_m^{-\beta_m}}{(1+t_1+\cdots+t_m)^{Q\lambda-\frac{\gamma}{q}+\alpha(\lambda+\frac{1}{q})}}dt_1\ldots dt_m.
	\end{aligned}
	$$
	Let
	$$
	I_m(Q\lambda-\frac{\gamma}{q}+\alpha(\lambda+\frac{1}{q}),\beta_1,\ldots,\beta_m)=\frac{\omega_Q^m}{Q^m}\int_0^{\infty}\cdots\int_0^{\infty}\frac{t_1^{-\beta_1}\cdots t_m^{-\beta_m}}{(1+t_1+\cdots+t_m)^{Q\lambda-\frac{\gamma}{q}+\alpha(\lambda+\frac{1}{q})}}dt_1\ldots dt_m.
	$$
	By a simple calculate, we have
	$$
	\begin{aligned}
		\int_0^{\infty}\frac{1}{(1+t)^{Q\lambda-\frac{\gamma}{q}+\alpha(\lambda+\frac{1}{q})}t^{\beta}}dt
		&=\int_0^{\infty}(1-t)^{-\beta}t^{Q\lambda-\frac{\gamma}{q}+\alpha(\lambda+\frac{1}{q})+\beta-2}\\
		&=B\left(1-\beta,Q\lambda-\frac{\gamma}{q}+\alpha(\lambda+\frac{1}{q})+\beta-1\right).
	\end{aligned}
	$$
	Using the variables substitution $t_m=(1+t_1+,\cdots+t_m-1)q_m$, we can obtain
	$$
	\begin{aligned}
		&I_m(Q\lambda-\frac{\gamma}{q}+\alpha(\lambda+\frac{1}{q}),\beta_1,\ldots,\beta_m)\\
		=&\int_0^{\infty}\cdots\int_0^{\infty}\frac{t_1^{-\beta_1}\cdots t_{m-1}^{-\beta_{m-1}}}{(1+t_1+\cdots+t_{m-1})^{Q\lambda-\frac{\gamma}{q}+\alpha(\lambda+\frac{1}{q})+\beta_m -1}}dt_1\ldots dt_{m-1}\\
		\times&\int_0^{\infty}\frac{1}{(1+q_m)^{Q\lambda-\frac{\gamma}{q}+\alpha(\lambda+\frac{1}{q})}q_m^{\beta_m}}dq_m\\
		=&B\left(1-\beta_m,Q\lambda-\frac{\gamma}{q}+\alpha(\lambda+\frac{1}{q})+\beta_m-1\right)I_m(Q\lambda-\frac{\gamma}{q}+\alpha(\lambda+\frac{1}{q})+\beta_m -1,\beta_1,\ldots,\beta_m)
	\end{aligned}
	$$
	Combining the inductive method and properties of Gamma function, we have
	$$
	I_m(Q\lambda-\frac{\gamma}{q}+\alpha(\lambda+\frac{1}{q}),\beta_1,\ldots,\beta_m)=\frac{\begin{matrix}\prod_{i=1}^m\end{matrix}\Gamma\left(1-\beta_i\right)
		\Gamma\left(Q\lambda-\frac{\gamma}{q}+\alpha(\lambda+\frac{1}{q})-m+\begin{matrix}\prod_{i=1}^m\end{matrix}\beta_i\right)}{\Gamma\left(Q\lambda-\frac{\gamma}{q}+\alpha(\lambda+\frac{1}{q})\right)}.
	$$
	Thus, we can obtain
	$$
	\begin{aligned}
		&\|P_m^{h*}\|_{\begin{matrix}\prod_{i=1}^m\end{matrix}H_{\alpha_i}^{\infty}(\mathbb{H}^n)\rightarrow H_{\alpha}^{\infty}(\mathbb{H}^n)}\\
		=&\frac{\omega_Q^m}{Q^m}\frac{\begin{matrix}\prod_{i=1}^m\end{matrix}\Gamma\left(1-\frac{Q\lambda_i-\frac{\gamma_i}{q}+\alpha(\lambda_i+\frac{1}{q_i})}{Q}\right)\Gamma\left(\frac{Q\lambda-\frac{\gamma}{q}+\alpha(\lambda+\frac{1}{q})}{Q}\right)}{\Gamma\left(m\right)}\\
		=&\left(\frac{2\pi^{n+\frac{1}{2}\Gamma(n/2)}}{(n+1)\Gamma(n)\Gamma((n+1)/2)}\right)^m\frac{\begin{matrix}\prod_{i=1}^m\end{matrix}\Gamma\left(1-\frac{Q\lambda_i-\frac{\gamma_i}{q}+\alpha(\lambda_i+\frac{1}{q_i})}{Q}\right)\Gamma\left(\frac{Q\lambda-\frac{\gamma}{q}+\alpha(\lambda+\frac{1}{q})}{Q}\right)}{\Gamma\left(m\right)}.
	\end{aligned}
	$$
\end{proof}
Thus, the proof of $Corollary$ \ref{main_5} is finished.
\subsection*{Acknowledgements}
This work was supported by National Natural Science Foundation of  China (Grant No. 12271232) and Shandong Jianzhu University Foundation (Grant No. X20075Z0101).

\begin{flushleft}
	
\vspace{0.3cm}\textsc{Xiang Li\\School of Science\\Shandong Jianzhu University\\Jinan, 250000\\P. R. China}
	
\emph{E-mail address}: \textsf{lixiang162@mails.ucas.ac.cn}

\vspace{0.3cm}\textsc{Zhongci Hang\\School of Science\\Shandong Jianzhu University \\Jinan, 250000\\P. R. China}

\emph{E-mail address}: \textsf{babysbreath4fc4@163.com}

\vspace{0.3cm}\textsc{Zhanpeng Gu\\School of Science\\Shandong Jianzhu University \\Jinan, 250000\\P. R. China}

\emph{E-mail address}: \textsf{guzhanpeng456@163.com}

\vspace{0.3cm}\textsc{Dunyan Yan\\School of Mathematical Sciences\\University of Chinese Academy of Sciences\\Beijing, 100049\\P. R. China}

\emph{E-mail address}: \textsf{ydunyan@ucas.ac.cn}

\end{flushleft}

\end{document}